\newtheorem{theorem}{Theorem}
\newtheorem{lemma}[theorem]{Lemma}
\newtheorem{corollary}[theorem]{Corollary}
\begin{document}

\title{Cyclic polygons in classical geometry}

\author{Ren Guo}

\address{School of Mathematics, University of Minnesota, Minneapolis, MN, 55455, USA}

\email{guoxx170@math.umn.edu}

\author{Nilg\"un S\"onmez}

\address{Afyon Kocatepe University, Faculty of Science and Arts, Department of Mathematics, ANS Campus, 03200 - Afyonkarahisar, Turkey}

\email{nceylan@aku.edu.tr}

\subjclass[2000]{51M09, 51M10}

\keywords{cyclic polygon, Euclidean geometry, hyperbolic geometry, spherical geometry, diagonal, radius.}

\begin{abstract} Formulas about the side lengths, diagonal lengths or radius of the circumcircle of a cyclic polygon in Euclidean geometry, hyperbolic geometry or spherical geometry can be unified. 
\end{abstract}

\maketitle

\section{Introduction}

In Euclidean geometry, hyperbolic geometry or spherical geometry, a cyclic polygon is a polygon whose vertexes are on a same circle. In Euclidean geometry, the side lengths and diagonal lengths of a cyclic polygon satisfy some polynomials. Ptolemy's theorem about a cyclic quadrilateral and Fuhrmann's theorem about a cyclic hexagon are examples. The two theorems also hold in hyperbolic geometry, for example, see \cite{S}. It is also observed in \cite{S} that the formulas for hyperbolic geometry are easily obtained by replacing an edge length $l/2$ in Euclidean geometry by $\sinh l/2$. In the paper we will show this is a general principle to translate a result in Euclidean geometry to a result in hyperbolic geometry. Also we get formulas in spherical geometry by replacing an edge length $l/2$ in Euclidean geometry by $\sin l/2$. 

In Euclidean geometry, the radius of the circumcircle of a cyclic polygon can be calculated from the side lengths. For more information about the radius of the circumcircle of a cyclic polygon, see, for example, \cite{MRR,FP,P,V}. As a corollary of our main result, the formulas of radius of the circumcircle of a cyclic polygon in Euclidean geometry, hyperbolic geometry or spherical geometry can be unified. 

In this paper, we do not give any new theorems about a cyclic polygon in Euclidean geometry. But we show that once there is a formula about sides lengths, diagonal lengths and radius of circumcircle of a cyclic polygon in Euclidean geometry, there is an essentially same formula in hyperbolic geometry or spherical geometry. In other words, we provide a machinery to generate theorems in hyperbolic geometry or spherical geometry. 

For recent development of the study of cyclic polygons in Euclidean geometry, see, for example, \cite{P1,P2}.

In section 2, 3 and 4, the general principle are illustrated by examples about triangles, cyclic quadrilaterals and cyclic hexagons. In section 5, the main result, Theorem \ref{main}, is stated formally. Section 6 establishes a lemma and section 7 proves Theorem \ref{main}.

\section{Triangle}

For a triangle on the Euclidean plane, assume that it has the side lengths $a,b,c$ and the radius of circumcircle $r$. Then 
$$r^2=\frac{(abc)^2}{(a+b+c)(b+c-a)(a+c-b)(a+b-c)}.$$ 

For a triangle on the hyperbolic plane, assume that it has the side lengths $a,b,c$ and the radius of circumcircle $r$. Then 
$$\frac14\sinh^2 r=\frac{(\sinh\frac a2 \sinh\frac b2 \sinh\frac c2)^2}
{A_h(A_h-2\sinh\frac a2)(A_h-2\sinh\frac b2)(A_h-2\sinh\frac c2)},$$ 
where $A_h=\sinh\frac a2+\sinh\frac b2+\sinh\frac c2.$

For a triangle on the unit sphere, assume that it has the side lengths $a,b,c$ and the radius of circumcircle $r$. Then 
$$\frac14\sin^2 r=\frac{(\sin\frac a2 \sin\frac b2 \sin\frac c2)^2}
{A_s(A_s-2\sin\frac a2)(A_s-2\sin\frac b2)(A_s-2\sin\frac c2)},$$ 
where $A_s=\sin\frac a2+\sin\frac b2+\sin\frac c2.$
The three formulas are essentially the same. They can be unified by introduce the following function:

$$
s(x)=\left\{
\begin{array}{lll}
\frac x2             &\ \ \mbox{in Euclidean geometry} \\
\sinh \frac x2       &\ \ \mbox{in hyperbolic geometry}\\
\sin  \frac x2       &\ \ \mbox{in spherical geometry}
\end{array}
\right.
$$

The unified formula is
$$\frac14 s(2r)^2=\frac{(s(a)s(b)s(c))^2}{A_3(A_3-2s(a))(A_3-2s(b))(A_3-2s(c))},$$ 
where $A_3=s(a)+s(b)+s(c)$.

\section{Cyclic quadrilateral}

\begin{figure}[ht!]
\labellist\small\hair 2pt
\pinlabel $a$ at 133 136
\pinlabel $b$ at 19 136
\pinlabel $c$ at 54 28
\pinlabel $d$ at 160 28
\pinlabel $e$ at 43 78
\pinlabel $f$ at 86 128

\endlabellist
\centering
\includegraphics[scale=0.45]{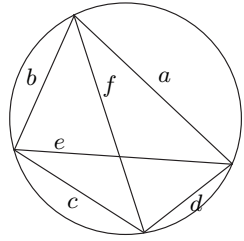}
\caption{Ptolemy's theorem}
\label{fig:quad}
\end{figure}

We can observe the similar phenomenon in the case of a quadrilateral inscribed in a circle which is called a cyclic quadrilateral. That means the formula about the side lengths, diagonal lengths and radius of circumcircle of a cyclic quadrilateral can be unified in Euclidean geometry, hyperbolic geometry and spherical geometry. Assume the side lengths are $a,b,c,d$, the diagonal lengths are $e,f$ as labeled in Figure \ref{fig:quad} and the radius of the circumcicle is $r$. Then the Ptolemy's theorem says
$$s(e)s(f)=s(a)s(c)+s(b)s(d).$$

For more information about generalization of Ptolemy's theorem in hyperbolic and spherical geometry, see \cite{V1,V2}.

The formulas representing diagonal lengths in terms of side lengths are 
$$s(e)^2=(s(a)s(c)+s(b)s(d))\frac{s(a)s(d)+s(b)s(c)}{s(a)s(b)+s(c)s(d)},$$
$$s(f)^2=(s(a)s(c)+s(b)s(d))\frac{s(a)s(b)+s(c)s(d)}{s(a)s(d)+s(b)s(c)}.$$ 
Ptolemy's theorem is a corollary of the two formulas.
               
The formula involving the radius is                      
$$\frac14 s(2r)^2=\frac{(s(a)s(b)+s(c)s(d))(s(a)s(c)+s(b)s(d))(s(a)s(d)+s(b)s(c))}
{(A_4-2s(a))(A_4-2s(b))(A_4-2s(c))(A_4-2s(d))}$$
where $A_4=s(a)+s(b)+s(c)+s(d).$

\section{Cyclic hexagon}

\begin{figure}[ht!]
\labellist\small\hair 2pt
\pinlabel $a$  at 162 187
\pinlabel $a'$ at 23 19
\pinlabel $b$  at 73 188
\pinlabel $b'$ at 141 3
\pinlabel $c$  at 9 120
\pinlabel $c'$ at 181 89
\pinlabel $e$  at 86 95
\pinlabel $f$  at 94 130
\pinlabel $g$  at 115 106

\endlabellist
\centering
\includegraphics[scale=0.45]{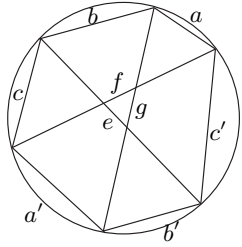}
\caption{Fuhrmann's theorem}
\label{fig:hex}
\end{figure}

One more example is Fuhrmann's theorem about a cyclic hexagon in Euclidean geometry. This theorem also holds in hyperbolic and spherical geometry. And the formula can be unified in the three cases. Assume that a convex cyclic hexagon in Euclidean geometry, hyperbolic geometry and spherical geometry have the side lengths $a, a', b, b', c,$ and $c'$, and diagonal lengths $e, f$, and $g$ as labeled in Figure \ref{fig:hex}, then
$$s(e)s(f)s(g)=s(a)s(a')s(e)+s(b)s(b')s(f)+s(c)s(c')s(g)+s(a)s(b)s(c)+s(a')s(b')s(c').$$

The generalization of Fuhrmann's theorem into hyperbolic geometry is treated in Wilson Stothers' web page about hyperbolic geometry \cite{S}.

\section{Cyclic polygon}

We can expect that the similar phenomenon holds for general polygon inscribed in a circle which is called the cyclic polygon. In this paper we show that the same relationships about the side lengths, diagonal lengths and the radius of the circumcicle hold in Euclidean geometry, hyperbolic geometry and spherical geometry. To make the statement formal, we introduce the following notations.  

Fix an integer $n\geq 3.$

The set $\mathcal{E}_n$ of polynomials is defined as follows. A polynomial $f$ of $\frac{n(n+1)}{2}$ variables is in the set $\mathcal{E}_n$ if $f(|P_iP_j|_e,r_e)=0$ for any $n$ points $P_1,P_2,...,P_n$ on a circle of radius $r_e$ on the Euclidean plane, where $|P_iP_j|_e$ denotes the Euclidean distance between the two points $P_i$ and $P_j$.   

The set $\mathcal{H}_n$ of polynomials is defined as follows. A polynomial $f$ of $\frac{n(n+1)}{2}$ variables is in the set $\mathcal{H}_n$ if $f(\sinh\frac{|P_iP_j|_h}2,\frac12\sinh r_h)=0$ for any $n$ points $P_1,P_2,...,P_n$ on a circle of radius $r_h$ on the hyperbolic plane, where $|P_iP_j|_h$ denotes the hyperbolic distance between the two points $P_i$ and $P_j$. 

The set $\mathcal{S}_n$ of polynomials is defined as follows. A polynomial $f$ of $\frac{n(n+1)}{2}$ variables is in the set $\mathcal{S}_n$ if $f(\sin\frac{|P_iP_j|_s}2,\frac12\sin r_s)=0$ for any $n$ points $P_1,P_2,...,P_n$ on a circle of radius $r_s$ on the unit sphere, where $|P_iP_j|_s$ denotes the spherical distance between the two points $P_i$ and $P_j$. 

\begin{theorem}\label{main} $\mathcal{E}_n=\mathcal{H}_n=\mathcal{S}_n$.
\end{theorem}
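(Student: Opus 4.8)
The plan is to reduce the theorem to a statement about chord lengths as functions of central angles, which turns out to be identical in all three geometries up to the substitution encoded by $s(x)$. The key observation is that a polynomial relation among the distances and radius of $n$ concyclic points is really a relation that must hold for all configurations, so it suffices to understand how each chord length depends on the free parameters describing the configuration. The natural parameters are the central angles subtended by the points. I would therefore first derive, in each geometry, the formula expressing a chord in terms of the central angle it subtends and the radius.

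Concretely, the first step is to establish the \emph{chord lemma}: if two points $P_i, P_j$ lie on a circle of radius $r$ and subtend a central angle $2\theta_{ij}$, then in Euclidean geometry the chord satisfies $\tfrac12|P_iP_j|_e = r\sin\theta_{ij}$, so that $s(|P_iP_j|_e) = \tfrac12|P_iP_j|_e = r\sin\theta_{ij}$; in hyperbolic geometry $\sinh\tfrac{|P_iP_j|_h}{2} = \sinh r_h \,\sin\theta_{ij}$, so $s(|P_iP_j|_h) = \sinh r_h\,\sin\theta_{ij}$; and in spherical geometry $\sin\tfrac{|P_iP_j|_s}{2} = \sin r_s\,\sin\theta_{ij}$, so $s(|P_iP_j|_s) = \sin r_s\,\sin\theta_{ij}$. (This is presumably exactly what the lemma of section~6 is meant to supply.) The crucial point is that in each case the transformed chord length factors as $(\text{radius term})\cdot\sin\theta_{ij}$, where the radius term is precisely $r_e$, $\sinh r_h$, or $\sin r_s$ — i.e., the quantity that, divided by $2$, is the \emph{second} argument of the test polynomials in $\mathcal{E}_n,\mathcal{H}_n,\mathcal{S}_n$.

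Given the chord lemma, the proof of the theorem becomes a change-of-variables argument. Set $R$ to denote the common radius term ($2r_e$, $2\cdot\tfrac12\sinh r_h = \sinh r_h$ matched appropriately, etc.) so that in every geometry the tuple fed into $f$ has the form
$$
\bigl(\,\tfrac{R}{2}\sin\theta_{ij}\ \text{for all } i<j,\ \ \tfrac{R}{2}\,\bigr),
$$
where $\theta_{ij}$ ranges over the central half-angles of an arbitrary $n$-point configuration and $R$ over the positive reals. The point is that \emph{the set of tuples so obtained is the same in all three geometries}: it is exactly $\{(\tfrac{R}{2}\sin\theta_{ij}, \tfrac{R}{2})\}$ as $R>0$ and the $\theta_{ij}$ range over all angle data realizable by $n$ concyclic points, and this realizable angle data does not depend on which geometry we are in. Hence a polynomial $f$ vanishes on the Euclidean tuples if and only if it vanishes on the hyperbolic tuples if and only if it vanishes on the spherical tuples, which is precisely $\mathcal{E}_n=\mathcal{H}_n=\mathcal{S}_n$.

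**The main obstacle** I anticipate is handling the quantifier ``for any $n$ points'' correctly: one must be sure that the map from configurations (parametrized, say, by $R$ together with the cyclically ordered positions of the $n$ points on the circle) to the argument-tuples of $f$ has the \emph{same image} in all three geometries, not merely the same form. This requires checking that the ranges of the radius parameter and of the angular positions are genuinely unconstrained and coincide across geometries — in spherical geometry, for instance, $r_s$ is bounded (a circle on the unit sphere has radius at most $\pi/2$ for the relevant range), so $\sin r_s$ ranges over $(0,1]$ rather than all of $(0,\infty)$. I would address this by noting that a polynomial vanishing on an open subset (or a set with nonempty interior, or a Zariski-dense subset) of the $\theta_{ij}$-and-$R$ domain must vanish on the whole domain, so the differing ranges of $R$ are immaterial: the shared angular freedom already forces $f\equiv 0$ as a polynomial identity, and that identity then holds on every geometry's tuples regardless of the radius range. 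Making this density/interior argument precise — i.e.\ identifying enough free parameters in the angle data to pin down $f$ as the zero polynomial — is the technical heart of the argument.
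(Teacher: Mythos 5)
Your proposal is correct in substance, and it organizes the argument differently from the paper. The paper also rests on your ``chord lemma'' (it is the law of sines, $\sin\frac{\angle P_iOP_j}{2}=s(|P_iP_j|)/s(2r)$, derived separately in each geometry), but instead of passing to a common parameter space it transports configurations between geometries explicitly: a hyperbolic circle centered at the origin of the Poincar\'e disk is literally a Euclidean circle, and a spherical circle maps to a Euclidean circle under stereographic projection, so each configuration in one geometry produces one in another with the same central angles; the resulting mismatch in scale is then absorbed by a separately proved homogeneity lemma ($f$ is homogeneous for every $f$ in any of the three classes, proved by radial rescaling within each geometry). Your route --- parametrize every configuration in every geometry by the angular positions together with a single radius parameter $R$, note that the tuple fed to $f$ has the same form in all three cases, and observe that the achievable angle data is geometry-independent --- reaches the same conclusion without invoking the disk model or stereographic projection at all, and your density argument for the bounded range of $\sin r_s$ does exactly the job the paper's homogeneity lemma does (indeed the paper's proof of homogeneity in the spherical case only obtains the scaling identity for $x\in(0,1)$ and tacitly uses the same extension of a polynomial identity beyond that interval). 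Two small points to repair in the write-up: (i) with $R=2r_e$, $\sinh r_h$, $\sin r_s$ respectively, the tuples are $(R\sin\theta_{ij},\, R/2)$, not $(\frac{R}{2}\sin\theta_{ij}, \frac{R}{2})$ --- harmless since the slip is uniform across the three geometries, but as written your Euclidean first coordinate is $\frac12|P_iP_j|_e$, whereas the definition of $\mathcal{E}_n$ feeds $f$ the full length $|P_iP_j|_e$; and (ii) the density step should conclude that the composite function $(R,\phi_1,\dots,\phi_n)\mapsto f(R\sin\theta_{ij},R/2)$ vanishes identically, not that ``$f\equiv 0$ as a polynomial identity'' --- $f$ itself is typically a nonzero polynomial (Ptolemy's relation, for instance), and it is only its pullback to configuration space that vanishes.
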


R. J. Gregorac \cite{G} generalized Ptolemy's theorem to a convex cyclic polygon on the Euclidean plane and generalized Fuhrmann's theorem to a convex cyclic $2n$-gon on the Euclidean plane. By Theorem \ref{main}, Gregorac's two results can be easily generalized into hyperbolic and spherical geometry and the formulas can be unified. For example, we consider the later one.

\begin{corollary} Let $n>3.$ Let $\{V_0,V_1,...,V_{2n-1}\}$ be points on a circle in Euclidean geometry, hyperbolic geometry or spherical geometry. Let $l_{ij}$ denote the length of the geodesic segment $V_iV_j$ for $i\neq j$. Then $$\det(a_{ij})=0$$ where 
$$a_{ji}=(-1)^{\delta_{ij}}(s(l_{2i-2,2j-1})s(l_{2j-1,2i}))^{-1}$$ and $\delta_{ij}$ is the Kronecker delta. 
\end{corollary}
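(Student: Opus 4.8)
The plan is to obtain the corollary by transporting Gregorac's Euclidean theorem \cite{G} to the other two geometries through Theorem \ref{main}. Since $\mathcal{E}_{2n}=\mathcal{H}_{2n}=\mathcal{S}_{2n}$, any polynomial relation among the quantities $s(l_{ij})$ that holds for every cyclic $2n$-gon in the Euclidean plane holds, verbatim, in hyperbolic and spherical geometry. The only real issue is that the displayed determinant is a rational, not a polynomial, expression in the $s(l_{ij})$, so it must first be repackaged as a genuine element of $\mathcal{E}_{2n}$.

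First I would expand $\det(a_{ij})$ by the Leibniz formula. Each term is a product of $n$ entries, and each entry is $\pm$ the reciprocal of a product of two of the quantities $s(l_{pq})$; hence $\det(a_{ij})$ is homogeneous of degree $-2n$ in these quantities. Multiplying by the denominator-clearing monomial $M:=\prod_{i,j} s(l_{2i-2,2j-1})s(l_{2j-1,2i})$ produces a polynomial $P:=M\det(a_{ij})$ in the $s(l_{pq})$, again homogeneous. In Euclidean geometry $s(l)=l/2$, so $P$ is, after rescaling each variable by the fixed factor $1/2$, a polynomial in the raw distances $|V_pV_q|_e$; this rescaling changes $P$ only by a nonzero constant, precisely because $P$ is homogeneous. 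Thus there is a polynomial $f$ in the ambient variables (not involving the radius variable) for which $f(|V_pV_q|_e,r_e)$ equals $M\det(a_{ij})$ up to a fixed nonzero factor, for every Euclidean configuration.

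By Gregorac's theorem, $\det(a_{ij})=0$ for every convex cyclic $2n$-gon in the Euclidean plane, so $f$ vanishes on all such configurations and $f\in\mathcal{E}_{2n}$. Theorem \ref{main} then gives $f\in\mathcal{H}_{2n}$ and $f\in\mathcal{S}_{2n}$: after the substitutions $s(l)=\sinh(l/2)$ and $s(l)=\sin(l/2)$, the relation $M\det(a_{ij})=0$ holds for every cyclic $2n$-gon in hyperbolic and in spherical geometry. Finally, since the vertices are distinct, each geodesic length is positive and, in the spherical case, less than $\pi$, so every factor $s(l_{pq})$ and hence $M$ is nonzero; dividing by $M$ gives $\det(a_{ij})=0$ in all three geometries.

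The step needing the most care is the passage from the rational determinant to the polynomial $f$: one must check that clearing denominators produces a polynomial identity valid on the entire configuration variety, so that Theorem \ref{main} applies to $f$ itself rather than only to the original expression off the locus where the denominators vanish, and that the Euclidean convention $s(l)=l/2$ contributes only an overall nonzero scalar, which is what homogeneity of $P$ secures. The only remaining point is the nonvanishing of $M$ used to divide back, which in the spherical setting requires that no two vertices be antipodal; this is implicit in the assumption that the $V_i$ lie on a genuine circle.
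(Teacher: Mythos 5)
Your overall route is exactly the one the paper intends (the paper itself offers no proof beyond the remark that Gregorac's results transfer via Theorem \ref{main}), and your denominator-clearing step is a genuine and necessary addition: $\det(a_{ij})$ is rational in the $s(l_{pq})$, so it is $P=M\det(a_{ij})$, not the determinant itself, that can be placed in $\mathcal{E}_{2n}$, and your homogeneity observation correctly disposes of the factor $1/2$ in the Euclidean convention $s(l)=l/2$.

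There is one remaining gap in the sentence ``so $f$ vanishes on all such configurations and $f\in\mathcal{E}_{2n}$.'' Membership in $\mathcal{E}_{2n}$, as defined in Section 5, requires $f$ to vanish for \emph{any} $2n$ points on a circle, whereas Gregorac's theorem gives the vanishing of $\det(a_{ij})$ only for \emph{convex} cyclic $2n$-gons, i.e., for vertices labeled in cyclic order. Passing from the convex locus to all labeled configurations is not automatic: the configurations with a different cyclic ordering of $V_0,\dots,V_{2n-1}$ lie in different components of the non-degenerate locus, and $f$ expressed through the chord lengths $2r|\sin\frac{\theta_i-\theta_j}{2}|$ need not extend analytically across the walls where points collide. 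Two repairs are available. Either state and use the corollary only for convex configurations, observing that every construction in the proof of Theorem \ref{main} (scaling from the center, stereographic projection) preserves the cyclic order of the vertices, so the analogue of Theorem \ref{main} for the ``convex'' versions of $\mathcal{E}_{2n},\mathcal{H}_{2n},\mathcal{S}_{2n}$ holds with the identical proof; or verify (e.g., from the form of Gregorac's identity, or by an analytic continuation argument on each component) that $P$ vanishes for arbitrary labelings. Your final step, dividing back by $M$, is fine once one adds the hypothesis you correctly identify: the $V_i$ must be pairwise distinct, and in the spherical case non-antipodal, so that every $s(l_{pq})\neq 0$.
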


A cyclic polygon in Euclidean, hyperbolic or spherical geometry is uniquely determined by its side lengths. Therefore any diagonal length or radius of the circumcircle is a function of side lengths. In the case of Euclidean geometry, it is not difficult to see that these functions are algebraic functions. As a corollary of Theorem \ref{main}, these functions are unified in Euclidean, hyperbolic or spherical geometry.

\begin{corollary} For a Euclidean, hyperbolic or spherical cyclic polygon with vertexes $P_1,P_2,...,P_n$ and the side lengths $|P_iP_{i+1}|=l_{i,i+1}$ where $n+1=1$, the length $l_{ij}$ of the diagonal $P_iP_j$ $(|j-i|\geq 2)$ is
$$s(l_{ij})=F_{ij}(s(l_{12}),s(l_{23}),...,s(l_{n1}))$$ for some algebraic function $F_{ij}$ which is independent of the three kinds of geometry. The radius $r$ of the circumcircle circle satisfies $$s(2r)=G(s(l_{12}),s(l_{23}),...,s(l_{n1}))$$ for some algebraic function $G$ which is independent of the three kinds of geometry.
\end{corollary}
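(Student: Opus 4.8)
The plan is to deduce the Corollary from Theorem~\ref{main} together with the classical fact, recorded in \cite{MRR,FP,P,V}, that in Euclidean geometry the diagonals and the circumradius of a cyclic polygon are algebraic functions of its side lengths. We prove no new Euclidean statement; the entire content beyond the Euclidean case is that the \emph{same} algebraic function serves in the other two geometries, and this is exactly what the equality $\mathcal{E}_n=\mathcal{H}_n=\mathcal{S}_n$ supplies. I would set up indeterminates $X_{ij}$ ($1\le i<j\le n$) standing for $s(l_{ij})$ and $\rho$ standing for $s(2r)$, distinguishing the $n$ \emph{side variables} $X_{12},X_{23},\dots,X_{n1}$ from the remaining \emph{diagonal variables} $X_{ij}$ with $|j-i|\ge 2$.

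First I would record the Euclidean input. A Euclidean cyclic $n$-gon on a circle of radius $r$ is determined up to isometry by the angular positions of its vertices; writing $\beta_k$ for half the central angle subtended by the $k$-th side, one has $\sum_k\beta_k=\pi$, $s(l_{k,k+1})=r\sin\beta_k$, and more generally $s(l_{ij})=r\sin(\beta_i+\cdots+\beta_{j-1})$. Since a cyclic polygon is determined by its side lengths, the $n$ side variables are algebraically independent coordinates on the ($n$-dimensional) configuration space, and each diagonal and the radius is a well-defined function of them. Eliminating the auxiliary quantities $r$ and $\beta_k$ from these relations, using $e^{i(\beta_1+\cdots+\beta_n)}=-1$ to dispose of the constraint $\sum_k\beta_k=\pi$, exhibits each $s(l_{ij})$ and $s(2r)$ as algebraic over the field generated by the side variables. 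This is the step the paper already deems ``not difficult.''

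Now comes the transfer, which is where Theorem~\ref{main} does the work. For a fixed diagonal $P_iP_j$, the above yields a polynomial $Q_{ij}(Y,X_{12},\dots,X_{n1})$, nonzero as a polynomial in $Y$, with $Q_{ij}(s(l_{ij}),s(l_{12}),\dots,s(l_{n1}))=0$ for every Euclidean cyclic $n$-gon. Viewed as a polynomial in all $\tfrac{n(n+1)}{2}$ variables that simply ignores the others, $Q_{ij}$ vanishes on every Euclidean configuration, so $Q_{ij}\in\mathcal{E}_n$. By Theorem~\ref{main}, $Q_{ij}\in\mathcal{H}_n$ and $Q_{ij}\in\mathcal{S}_n$, so the identical relation holds for every hyperbolic and every spherical cyclic $n$-gon, now with $s=\sinh\tfrac{\cdot}{2}$ and $s=\sin\tfrac{\cdot}{2}$; thus $s(l_{ij})$ is a root of one and the same polynomial in all three geometries, and this is the sought $F_{ij}$. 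The circumradius is handled identically: Euclidean algebraicity gives $R(Z,X_{12},\dots,X_{n1})$, nonzero in $Z$, with $R(s(2r),s(l_{12}),\dots)=0$; it lies in $\mathcal{E}_n$, hence in $\mathcal{H}_n$ and $\mathcal{S}_n$, yielding the common function $G$. The factor-of-two normalizations distinguishing the three defining substitutions are absorbed by rescaling the variables of $Q_{ij}$ and $R$, which preserves membership in the polynomial sets.

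The one point genuinely requiring care is that $Q_{ij}(Y,\cdot)=0$ defines a \emph{multivalued} algebraic function, so I must verify that the geometrically realized diagonal is the \emph{same branch} in all three geometries, and likewise for $R$; this, rather than the formal transfer, is the real obstacle. I would settle it by continuity: on the (connected) domain of side tuples bounding a convex cyclic polygon, $s(l_{ij})$ is a continuous single-valued branch, and in the small-polygon limit, where $\sinh\tfrac{l}{2}$, $\tfrac{l}{2}$ and $\sin\tfrac{l}{2}$ agree to leading order, the hyperbolic and spherical values coincide with the Euclidean one. Hence all three select the branch that is Euclidean near the degenerate locus, and analytic continuation over the connected domain keeps them on that branch throughout. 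Establishing connectedness of the admissible domain in each geometry is the technical heart of this last step.
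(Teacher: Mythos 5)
Your main transfer step is exactly the paper's intended argument (the paper itself offers only the two-sentence sketch preceding the corollary): Euclidean elimination yields a polynomial $Q_{ij}(Y,X_{12},\dots,X_{n1})$, nonzero in $Y$, that vanishes on all Euclidean configurations, hence lies in $\mathcal{E}_n$; Theorem \ref{main} places it in $\mathcal{H}_n$ and $\mathcal{S}_n$; so $s(l_{ij})$ and $s(2r)$ satisfy one common polynomial relation with the side data in all three geometries. Up to the weak reading of ``same algebraic function'' (root of a common polynomial), this is complete and matches the paper.

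The branch-selection step --- which you are right to flag, and which the paper passes over in silence --- is where your argument has a soft spot, and also where a much lighter argument is available. As stated, ``analytic continuation over the connected domain keeps them on that branch'' can fail wherever the discriminant of $Q_{ij}(Y,\cdot)$ vanishes inside the admissible domain: there the branches collide and continuation can switch sheets, and agreement ``to leading order'' at the degenerate boundary does not by itself force equality at interior points (two distinct roots can both vanish to first order there). Moreover connectedness of the admissible domain, which you call the technical heart, is never needed. The fix is already contained in the paper's proof of Theorem \ref{main}: a hyperbolic cyclic polygon, placed in the Poincar\'e disk with circumcenter at the origin, \emph{is} simultaneously a Euclidean cyclic polygon on the same circle, and the displayed law-of-sines identities show its hyperbolic $s$-data equals one common scalar multiple $c=\frac{\sinh r_h}{2r_e}\cdot 2$ of its Euclidean $s$-data, radius slot included; stereographic projection does the same for the sphere. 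Since the Euclidean functions $F^e_{ij}$ and $G^e$ are homogeneous of degree one (Euclidean similarity, or Lemma \ref{homo} applied to their minimal polynomials), $F^e_{ij}(c\,u)=c\,F^e_{ij}(u)$ yields $s(l_{ij})=F^e_{ij}(s(l_{12}),\dots,s(l_{n1}))$ \emph{exactly} in the hyperbolic and spherical cases --- no limits, no continuation, no branch tracking --- and likewise for $G$. One further small caution on your Euclidean input: the parametrization with $\sum_k\beta_k=\pi$ presumes the circumcenter lies inside the polygon; for the general (non-convex or center-outside) case the arc signs must be adjusted, though elimination still produces the required $Q_{ij}$.
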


\section{Homogeneity}

\begin{figure}[ht!]
\labellist\small\hair 2pt
\pinlabel $P_1$  at 132 84
\pinlabel $P_1'$ at 156 74
\pinlabel $P_2$  at 101 172
\pinlabel $P_2'$ at 111 198
\pinlabel $P_3$  at 30 153
\pinlabel $P_3'$ at 9 172
\pinlabel $P_4$  at 33 76
\pinlabel $P_4'$ at 12 56
\pinlabel $P_1$  at 344 86
\pinlabel $P_1'$ at 368 73
\pinlabel $P_2$  at 314 174
\pinlabel $P_2'$ at 324 202
\pinlabel $P_3$  at 243 153
\pinlabel $P_3'$ at 222 174
\pinlabel $P_4$  at 246 76
\pinlabel $P_4'$ at 228 55
\pinlabel $P_1$  at 542 106
\pinlabel $P_1'$ at 525 76
\pinlabel $P_2$  at 566 125
\pinlabel $P_2'$ at 567 88
\pinlabel $P_3$  at 476 128
\pinlabel $P_3'$ at 470 88
\pinlabel $P_4$  at 455 108
\pinlabel $P_4'$ at 464 56
\pinlabel $(a)$ at 81 10
\pinlabel $(b)$ at 297 10
\pinlabel $(c)$ at 507 10
\pinlabel $O$ at 74 122
\pinlabel $O$ at 285 122
\pinlabel $S$ at 503 32
\endlabellist
\centering
\includegraphics[scale=0.62]{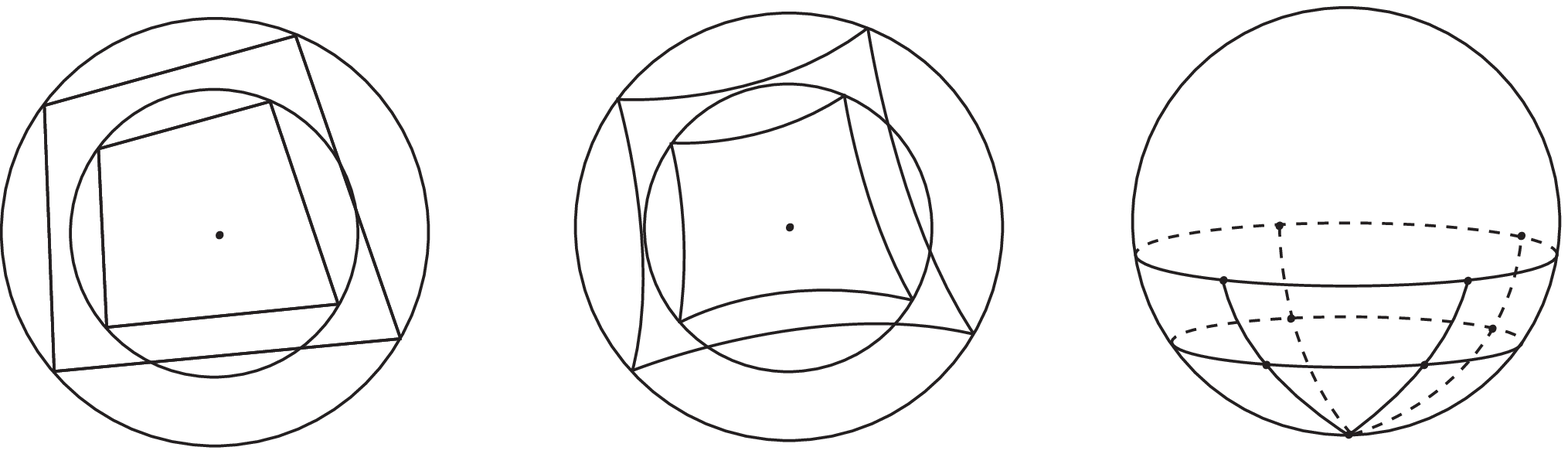}
\caption{}
\label{fig:homo}
\end{figure}

To prove Theorem \ref{main}, we need the following lemma.

\begin{lemma}\label{homo} If $f\in \mathcal{E}_n \cup \mathcal{H}_n \cup \mathcal{S}_n$, then $f$ is homogeneous.
\end{lemma}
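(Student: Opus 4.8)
The plan is to exploit a hidden scaling that is present in all three geometries, even though the hyperbolic plane and the sphere admit no genuine dilations. The starting observation is that a cyclic configuration is determined by a circle together with the angular positions $\theta_1,\dots,\theta_n$ of the points on it; writing $\alpha_{ij}=\theta_i-\theta_j$ for the central angle subtended by the chord $P_iP_j$, the law of cosines in each geometry yields the uniform factorizations
\[
|P_iP_j|_e=2r_e\sin\tfrac{\alpha_{ij}}2,\qquad
\sinh\tfrac{|P_iP_j|_h}2=\sinh r_h\,\sin\tfrac{\alpha_{ij}}2,\qquad
\sin\tfrac{|P_iP_j|_s}2=\sin r_s\,\sin\tfrac{\alpha_{ij}}2.
\]
Together with the radius coordinates $r_e$, $\tfrac12\sinh r_h$, $\tfrac12\sin r_s$, this says that in every case the full argument vector fed to $f$ has the shape $t\,\mathbf u$, where $\mathbf u$ depends only on the angles $\alpha_{ij}$ and $t$ is the single scalar $r_e$, $\sinh r_h$, or $\sin r_s$ respectively.

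First I would use this to show that the set $C$ of achievable argument vectors is a cone. The angular data and the radius are independent parameters---any angular configuration can be placed on a circle of any admissible radius---so for a fixed $\mathbf u$ the radius may be varied freely, and $t$ then sweeps out an interval of positive values: all of $(0,\infty)$ in the Euclidean and hyperbolic cases, and a nondegenerate interval inside $(0,1)$ in the spherical case. Hence $C$ contains the ray $\{t\,\mathbf u:t\in I\}$ for each achievable $\mathbf u$. Now write $f=\sum_d f_d$ with $f_d$ its homogeneous part of degree $d$. For a fixed $\mathbf u$ the function $t\mapsto f(t\,\mathbf u)=\sum_d t^d f_d(\mathbf u)$ is a polynomial in $t$ that vanishes on the interval $I$, hence vanishes identically, so $f_d(\mathbf u)=0$ for every $d$. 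Since $\mathbf u$ was an arbitrary achievable angular configuration and each $f_d$ is homogeneous, each $f_d$ vanishes on all of $C$, i.e. each homogeneous component of $f$ again lies in the same set. Thus the vanishing ideal is homogeneous, which is exactly what the lemma records: in the proof of Theorem~\ref{main} we may decompose $f$ into its homogeneous parts and assume $f$ homogeneous.

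The routine computations are the three law-of-cosines identities, and I would not belabor them. The real content, and the step I expect to be the main obstacle, is the factorization itself: recognizing that although no dilation acts on the hyperbolic plane or the sphere, the specific quantities $\sinh\tfrac{\ell}2$ and $\sin\tfrac{\ell}2$ nonetheless split off a clean radial factor $\sinh r_h$, $\sin r_s$ that decouples from the purely angular part. This is what converts ``no scaling symmetry of the space'' into ``an honest scaling in $(\text{radius},\text{angle})$ coordinates.'' A secondary point requiring a word of care is the spherical case, where the scaling parameter $t=\sin r_s$ is confined to a bounded interval; this is harmless, since an interval still contains infinitely many points and a one-variable polynomial vanishing there vanishes identically.
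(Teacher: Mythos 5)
Your proposal is correct and is essentially the paper's own argument: your factorizations $|P_iP_j|_e=2r_e\sin\frac{\alpha_{ij}}2$, $\sinh\frac{|P_iP_j|_h}2=\sinh r_h\sin\frac{\alpha_{ij}}2$, $\sin\frac{|P_iP_j|_s}2=\sin r_s\sin\frac{\alpha_{ij}}2$ are exactly the law-of-sines identities the paper uses to project a configuration radially onto a concentric circle of rescaled radius, so in both proofs the argument vector has the form $t\mathbf{u}$ with $\mathbf{u}$ purely angular and $t$ sweeping an interval of radii. If anything, your closing step is more careful than the paper's: you decompose $f$ into homogeneous components and invoke the vanishing of a one-variable polynomial on an interval (which also handles the bounded spherical range $t\in(0,1)$), correctly reading the conclusion as ``each homogeneous part of $f$ again vanishes on the configuration set,'' whereas the paper passes directly from $f(x\mathbf{v})=0$ to ``$f$ is homogeneous''---and it is your scaling-invariance form of the statement that is actually invoked in the proof of Theorem \ref{main}.
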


\begin{proof} Assume $f\in \mathcal{E}_n$. Then $f(|P_iP_j|_e,r_e)=0$ for any $n$ points $P_1,P_2,...,P_n$ on a circle of any radius $r_e$ on the Euclidean plane. We can assume the center of the circle is the origin. For any $x>0,$ by multiplying the coordinates of $P_1,P_2,...,P_n$ by $x$, we obtain new points $P'_1,P'_2,...,P'_n$ with is on the circle of radius $xr_e$ centered at the origin, see Figure \ref{fig:homo}(a). The distance $|P'_iP'_j|_e=x|P_iP_j|_e$ and the radius $xr_e$ also satisfy the polynomial $f$, i.e., $f(x|P_iP_j|_e,xr_e)=0$. Therefore $f$ is homogeneous. 

Assume $f\in \mathcal{H}_n$. Then $f(\sinh\frac{|P_iP_j|_h}2,\frac12\sinh r_h)=0$ for any points $P_1,P_2,...,P_n$ on a circle $C$ of radius $r_h$ on the hyperbolic plane. We use the Poincar\'e disk model of the hyperbolic plane: $\{z\in \mathbb{C}: |z|<1\}$. Assume the center of the circle is the origin $O$. For any $x>0$, there is another circle $C'$ centered at $O$ with radius $r_h'$ satisfying $\sinh r_h'= x\sinh r_h.$ For each $i$, let $P_i'$ be the intersection point of $C'$ and the geodesic ray starting from $O$ and passing though $P_i$ as in Figure \ref{fig:homo}(b). By the law of sine of a hyperbolic triangle, we have 
$$\frac{\sinh\frac{|P_iP_j|_h}2}{\sinh r_h}=\sin\frac{\angle P_iOP_j}{2}
=\frac{\sinh\frac{|P_i'P_j'|_h}2}{\sinh r_h'}=\frac{\sinh\frac{|P_i'P_j'|_h}2}{x\sinh r_h}$$
where $\angle P_iOP_j \in (0,\pi].$ Therefor $\sinh\frac{|P_i'P_j'|_h}2=x\sinh\frac{|P_iP_j|_h}2$.

Since $f\in \mathcal{H}_n,$ the equation $f(\sinh\frac{|P_i'P_j'|_h}2,\frac12\sinh r_h')=0$ holds. Thus $$f(x\sinh\frac{|P_iP_j|_h}2,x\frac12\sinh r_h)=0$$ holds for any $x>0$. Therefore $f$ is homogeneous. 

Assume $f\in \mathcal{S}_n$. Then $f(\sin\frac{|P_iP_j|_s}2,\frac12\sin r_s)=0$ for any points $P_1,P_2,...,P_n$ on a circle $C$ of radius $r_s$ on the unit sphere. Assume the center of the circle is the south pole $S$. For any $x\in (0,1)$, there is another circle $C'$ centered at south pole with radius $r_s'$ satisfying $\sin r_s'= x\sin r_s.$ For each $i$, let $P_i'$ be the intersection point of $C'$ and the geodesic ray starting from $O$ and passing through $P_i$ as in Figure \ref{fig:homo}(c). By the law of sine of a spherical triangle, we have 
$$\frac{\sin\frac{|P_iP_j|_s}2}{\sin r_s}=\sin\frac{\angle P_iOP_j}{2}
=\frac{\sin\frac{|P_i'P_j'|_s}2}{\sin r_s'}=\frac{\sin\frac{|P_i'P_j'|_s}2}{x\sin r_s}$$
where $\angle P_iOP_j \in (0,\pi].$ Therefor $\sin\frac{|P_i'P_j'|_s}2=x\sin\frac{|P_iP_j|_s}2$.

Since $f\in \mathcal{S}_n,$ the equation $f(\sin\frac{|P_i'P_j'|_s}2,\frac12\sin r_s')=0$ holds. Thus $$f(x\sin\frac{|P_iP_j|_s}2,x\frac12\sin r_s)=0$$ holds for any $x\in (0,1)$. Therefore $f$ is homogeneous. 

\end{proof}

\section{Proof of Theorem \ref{main}}

\subsection{$\mathcal{E}_n \subseteq \mathcal{H}_n$} 

Let $f$ be a polynomial in $\mathcal{E}_n.$ 

Consider $n$ points $P_1,P_2,...,P_n$ on a circle of radius $r_h$ on the hyperbolic plane. We use the Poincar\'e disk model of the hyperbolic plane. Assume that the center of the circle is the origin $O$. In the hyperbolic triangle $\triangle_h P_iOP_j$ which can be degenerated into a line segment, by the law of sine, we have 
$$\sin\frac{\angle P_iOP_j}{2}=\frac{\sinh\frac{|P_iP_j|_h}2}{\sinh r_h}$$
where $\angle P_iOP_j \in (0,\pi].$

On the other hand, this circle is also a Euclidean circle. We consider the Euclidean triangle $\triangle_e P_iOP_j$, by the law of sine, we have 
$$\sin\frac{\angle P_iOP_j}{2}=\frac{|P_iP_j|_e}{2r_e}.$$

Hence $$\sinh\frac{|P_iP_j|_h}2=\frac{\sinh r_h}{2r_e}\cdot |P_iP_j|_e.$$
Obviously,
$$\frac12 \sinh r_h=\frac{\sinh r_h}{2r_e}\cdot r_e.$$

Since $f\in \mathcal{E}_n,$ $f(|P_iP_j|_e,r_e)=0$. By Lemma \ref{homo}, 
$$f(\sinh\frac{|P_iP_j|_h}2,\frac12\sinh r_h)=f(x|P_iP_j|_e,xr_e)=0,$$ where $x=\frac{\sinh r_h}{2r_e}$. Hence $f\in \mathcal{H}_n$.

\subsection{$\mathcal{H}_n \supseteq \mathcal{E}_n$} 

Let $f$ be a polynomial in $\mathcal{H}_n.$ 

For any points $P_1,P_2,...,P_n$ on a circle of radius $r_e$ centered at the origin on the Euclidean plane. For a number $y\in (0,\frac 1{r_e}),$ consider the circle $C'$ centered at the origin and with radius $yr_e.$ 
For any $i$, let $P'_i$ be the the intersection point of $C'$ and the ray starting from $O$ and and passing through $P_i$. For any $i,j$, we consider the Euclidean triangle $\triangle_e P_iOP_j$ which can be degenerated. On the other hand,
Since all the points $P'_1,P'_2,...,P'_n$ are in the unit disk which is considered as the hyperbolic plane, for any $i,j$, there is also a hyperbolic triangle $\triangle_h P_iOP_j$.
By the law of sine, we have 
$$\frac{\sinh\frac{|P'_iP'_j|_h}2}{\sinh |OP'_i|_h}=\sin\frac{\angle P_iOP_j}{2}=\frac{|P'_iP'_j|_e}{2yr_e}=\frac{|P_iP_j|_e}{2r_e}.$$ 

By Lemma \ref{homo}, $f(x\sinh\frac{|P'_iP'_j|_h}2,x\frac12\sinh |OP'_i|_h)=0$ for any $x>0$. By taking $x=\frac{2r_e}{\sinh |OP'_i|_h}$, 
we have $f(|P_iP_j|_e, r_e)=0.$ Thus $f\in \mathcal{E}_n.$

To sum up, we have proved that $\mathcal{E}_n=\mathcal{H}_n$.

\subsection{$\mathcal{E}_n \subseteq \mathcal{S}_n$}

Let $f$ be a polynomial in $\mathcal{E}_n.$ 

Consider $n$ points $P_1,P_2,...,P_n$ on a circle of radius $r_s$ on the unit sphere $\{(x,y,z):x^2+y^2+z^2=1\}$. Assume that the center of the circle is the south pole $S$. In the spherical triangle $\triangle_s P_iSP_j$ which can be degenerated, by the law of sine, we have 
$$\sin\frac{\angle P_iSP_j}{2}=\frac{\sin\frac{|P_iP_j|_s}2}{\sin r_s}.$$

For any $i$, let $P'_i$ be the image of $P_i$ under the stereographic projection from the north pole, i.e., the intersection point of the plane $z=0$ and the line passing through the north pole and $P_i$. Then $P'_1,P'_2,...,P'_n$ are points on a circle of radius $r_e$ centered at the origin $O$ on the Euclidean plane $z=0$. In the Euclidean triangle $\triangle_e P_iOP_j$, by the law of sine, we have 
$$\sin\frac{\angle P'_iOP'_j}{2}=\frac{|P_iP_j|_e}{2r_e}.$$

Since $\angle P_iSP_j=\angle P'_iOP'_j,$ we have 
$$\sin\frac{|P_iP_j|_s}2=\frac{\sin r_s}{2r_e}\cdot |P'_iP'_j|_e.$$
Obviously,
$$\frac12 \sin r_s=\frac{\sin r_s}{2r_e}\cdot r_e.$$

Since $f\in \mathcal{E}_n,$ $f(|P'_iP'_j|_e,r_e)=0$. By Lemma \ref{homo}, 
$$f(\sin\frac{|P_iP_j|_s}2,\frac12\sin r_s)=f(x|P'_iP'_j|_e,xr_e)=0,$$ where $x=\frac{\sin r_s}{2r_e}$. Hence $f\in \mathcal{S}_n$.

\subsection{$\mathcal{S}_n \subseteq \mathcal{E}_n$}

Let $g$ be a polynomial in $\mathcal{S}_n.$

For any points $B_1,B_2,...,B_n$ on a circle of radius $r_e$ centered at the origin $O$ on the Euclidean plane. 
Consider the stereographic projection from the north pole of the unit sphere $\{(x,y,z):x^2+y^2+z^2=1\}$. The image of 
$B_1,B_2,...,B_n$ under the stereographic projection are points $B'_1,B'_2,...,B'_n$ on a circle of radius $r_s$ centered at the south pole $S$ on the unit sphere. By the law of sine, we have 
$$\sin\frac{\angle B_iOB_j}{2}=\frac{|B_iB_j|_e}{2r_e},$$ 
$$\sin\frac{\angle B'_iSB'_j}{2}=\frac{\sin\frac{|B'_iB'_j|_s}2}{\sin r_s}.$$

Since $\angle B_iOB_j=\angle B'_iSB'_j,$ we have 
$$|B_iB_j|_e=\frac{2r_e}{\sin r_s}\cdot \sin\frac{|B'_iB'_j|_s}2.$$
Obviously, 
$$r_e=\frac{2r_e}{\sin r_s} \cdot \frac12 \sin r_s.$$

Since $g\in \mathcal{S}_n,$ $g(\sin\frac{|B'_iB'_j|_s}2, \frac12 \sin r_s)=0.$ By Lemma \ref{homo}, $$g(x\sin\frac{|B'_iB'_j|_s}2, x\frac12 \sin r_s)=g(|B_iB_j|_e, r_e)=0,$$ where $x=\frac{2r_e}{\sin r_s}.$ Hence $g\in  \mathcal{E}_n.$ 

To sum up, we have proved that $\mathcal{E}_n=\mathcal{S}_n$.

\end{document}